\newtheorem{thm}{Theorem}
\newtheorem{cor}[thm]{Corollary}
\newtheorem{prop}[thm]{Proposition}
\theoremstyle{definition}
\newtheorem{rmk}{Remark}[section]
\newtheorem{exmp}{Example}[section]
\numberwithin{equation}{section}
\title[Wirtinger-type inequalities and geometric applications]{Higher order Wirtinger-type inequalities \\ and sharp bounds for the isoperimetric deficit}
\author{Kwok-Kun Kwong}
\address{School of Mathematics and Applied Statistics, University of Wollongong, NSW 2522, Australia}
\email{kwongk@uow.edu.au}
\author{Hojoo Lee}
\address{Department of Mathematics and Institute of Pure and Applied Mathematics, Jeonbuk National University,
Jeonju 54896, Korea}
\email{compactkoala@gmail.com, kiarostami@jbnu.ac.kr}
 \numberwithin{equation}{section}
\begin{document}

 \maketitle

\begin{abstract}
Using Fourier analysis, we derive Wirtinger-type inequalities of arbitrary high order. As applications, we prove various sharp geometric inequalities for closed curves on the Euclidean plane. In particular, we obtain both sharp lower and upper bounds for the isoperimetric deficit.
\end{abstract}

% \tableofcontents 

%%%%%%%%%%%%%%%%%%%%%%%%%%%%%%%%%%%%%%%%%%%%%%%%%%%%%%%%%%%%%%%%%%%%%%%%%%%%%%%%
%%%%%%%%%%%%%%%%%%%%%%%%%%%%%%%%%%%%%%%%%%%%%%%%%%%%%%%%%%%%%%%%%%%%%%%%%%%%%%%%
 
%%%%%%%%%%%%%%%%%%%%%%%%%%%%%%%%%%%%%%%%%%%%%%%%%%%%%%%%%%%%%%%%%%%%%%%%%%%%%%%%

\section{Introduction}

The isoperimetric inequality states that for a simple closed curve $\mathcal C$ of length $L$ on the plane which encloses a region of area $A$, the isoperimetric deficit $L^2-4\pi A$ is
 non-negative, and it is zero if and only if $\mathcal C$ is a circle. In 1902, A. Hurwitz gave two Fourier analytic proofs of the isoperimetric inequalities, one for convex curves 
 (\cite[pp. 371-373]{Hurwitz1902}, \cite[Section 4.2]{Groemer1996}) and another one for general curves (\cite[pp. 392--394]{Hurwitz1902}, \cite[Section 4.1]{Groemer1996}). As well-known in \cite[Chapter 7, Section 7]{HLP1988}, \cite[Chapter 1, Section 2]{Chavel2001}, and \cite[pp.1183--1185]{Osserman1978}, the planar isoperimetric inequality $L^2-4\pi A \geq 0$ is an immediate consequence of the one dimensional Poincar\'{e} inequality, so called Wirtinger's inequality \cite[Section 23, II. Ein Lemma von Wirtinger]{Blaschke1949},
\begin{equation} \label{introWir01}
\int_{0}^{\, 2\pi \,} {f'(t)}^{2} dt - \int_{0}^{\, 2\pi \,} {f(t)}^{2} dt  \geq 0,
\end{equation}
which holds when the smooth function $f: \mathbb{R} \to \mathbb{R}$ is $2\pi$-periodic and has zero mean $\int_{0}^{\, 2\pi \,} f(t) dt =0$.

Wirtinger's inequality is ubiquitous in geometric and functional analysis. For its several applications in geometric flows, see \cite{GageHamilton1986}, \cite[Section 4]{LinTsai2014},
and \cite[Lemma 2.8 and Remark 8]{Tsai2005}. For various proofs of the Wirtinger inequality, we refer the interested readers to \cite[Chapter 5, Section 10]{BeckenbachBellman1961}, 
\cite[Chapter 4, Section 2]{DAngelo2019}, and \cite[Chapter 2]{MPF1991}. Inspired by the ingenious method of Hurwitz using Fourier series, we will derive a family of Wirtinger-type inequalities, which involves the derivatives of $f$ of arbitrarily large order. If a smooth function $f: \mathbb{R} \to \mathbb{R}$ is $2\pi$-periodic and has zero mean, then 
\begin{equation} \label{introWir02}
\int_{0}^{\, 2\pi \,} {f''(t)}^{2} dt - 5 \int_{0}^{\, 2\pi \,} {f'(t)}^{2} dt + 4 \int_{0}^{\, 2\pi \,} {f(t)}^{2} dt \geq 0,
\end{equation}
and
\begin{equation} \label{introWir03}
\int_{0}^{\, 2\pi \,} {f'''(t)}^{2} dt - 14 \int_{0}^{\, 2\pi \,} {f''(t)}^{2} dt + 49  \int_{0}^{\, 2\pi \,} {f'(t)}^{2} dt  -36 \int_{0}^{\, 2\pi \,} {f(t)}^{2} dt \geq 0.
\end{equation}

It has been of great interest to obtain upper and lower bounds of the isoperimetric deficit in terms of other geometric quantities  which are computable. The first geometric application of our second order inequality (\ref{introWir02}) is the reverse Sachs inequality, which is the second inequality in the following:
\begin{equation} \label{introSachs02}
0 \leq \frac{\, {L}^{3} \, }{\, 4 {\pi}^{2} \, } - \int_{\mathcal{C}} {\left\vert \mathbf{X} -  \mathbf{G}  \, \right\vert}^{2} \, ds \leq \frac{ L^4 }{\,64 {\pi}^{4}\,}
\left(\int_{\mathcal{C}} {\, \kappa \,}^{2} \, ds - \frac{4 \pi^2}{L} \right),
\end{equation}
where $\mathbf{G}$ is the center of the gravity of the curve $\mathcal{C}$ and $\kappa$ denotes the curvature function on the curve $\mathcal{C}$.
The inequalities in (\ref{introSachs02}) give the lower and upper bounds of the isoperimetric deficit $L^2 - 4 \pi A$:
\begin{equation} \label{Reverse01intro1}
 \frac{\, 2 {\pi}^{2} \, }{L} \int_{\mathcal{C}} \, {\left\vert \, \mathbf{X} - \mathbf{G} - {\left(\frac{L}{\, 2\pi \,} \right)} \mathbf{N} \, \right\vert}^{2} \, ds
 \leq L^2 - 4 \pi A
\end{equation}
and
\begin{equation} \label{Reverse01intro2}
L^2 - 4 \pi A
\leq   \frac{\, 2 {\pi}^{2} \, }{L} \int_{\mathcal{C}} \, {\left\vert \, \mathbf{X} - \mathbf{G} - {\left(\frac{L}{\, 2\pi \,} \right)} \mathbf{N} \, \right\vert}^{2} \, ds + \frac{\, 2 {\, \pi \,}^{2} \, }{3L} \int_{\mathcal{C}} \, {\left\vert \, \mathbf{X} - \mathbf{G} - {\left(\frac{L}{\, 2\pi \,} \right)}^{2} \kappa \mathbf{N} \, \right\vert}^{2} \, ds.
\end{equation}

The first inequality in (\ref{introSachs02}), called the Sachs inequality \cite{Sachs1960}, is an immediate consequence of the Wirtinger inequality (\ref{introWir01}).
As illustrated in Osserman's outstanding survey \cite[pp.1203--1204]{Osserman1978}, the Sachs inequality implies the isoperimetric inequality.
This technique could be extended to the isoperimetric inequalities for curves on two dimensional minimal surfaces \cite{Chakerian1978, DHT2010, LSY1984, Osserman1978}.
In particular, G. D. Chakerian \cite{Chakerian1978} proved that the sharpened isoperimetric inequality \eqref{Reverse01intro1} holds on minimal surfaces.
For higher dimensional generalizations, we refer the interested readers to Chavel \cite{Chavel1978}, Reilly \cite{Reilly1977}, and
\cite[Theorem 5.8]{Osserman1978}.

As another application of the higher order Wirtinger's inequality, assuming the convexity of the curve $\mathcal{C}$, we prove, among other things, the geometric inequalities (Example \ref{coro second main})
\begin{equation} \label{introLT}
0 \leq \left(\int_{\mathcal{C}} \frac{1}{\, \kappa\, } ds - \frac{L^2}{\, 2\pi \,} \right) - \frac{3}{\, 2\pi\, } \left(L^{2} - 4\pi A \right)
\leq \frac{1}{\, 12\, } \left[ \, \int_{\mathcal{C}} \frac{1}{\, {\, \kappa \,}^{5} \, } {\left(\frac{d\kappa}{ds} \right)}^{2} ds - \frac{6}{\, \pi\, } \left(L^{2} - 4\pi A \right) \, \right].
\end{equation}
The first inequality in (\ref{introLT}) is also a consequence of our second order Wirtinger-type inequality (\ref{introWir02}), and is
equivalent to the Lin-Tsai inequality \cite[Lemma 1.7]{LinTsai2012}. The second inequality in (\ref{introLT}) is a consequence of our
third order Wirtinger-type inequality (\ref{introWir03}), and can be regarded as a lower bound of the isoperimetric deficit. Both inequalities in \eqref{introLT} possess non-round equality cases and we will discuss it in Section \ref{Gapplications}. The two inequalities in (\ref{introLT}) imply that the convex curve $\mathcal{C}$ satisfies the reversed isoperimetric inequality
\[
L^{2} - 4\pi A \leq \frac{\, \pi\, }{6} \int_{\mathcal{C}} \frac{1}{\, {\, \kappa \,}^{5} \, } {\left(\frac{d\kappa}{ds} \right)}^{2} ds.
\]
We also establish that the convex curve $\mathcal{C}$ satisfies the reverse isoperimetric inequalities (Corollary \ref{cor reverse})
\[
L^{2} - 4\pi A \leq \frac{L^2}{\, 24\pi \,} \int_{\mathcal{C}} \frac{1}{\, {\, \kappa \,}^{3} \, } {\left(\frac{d\kappa}{ds} \right)}^{2} ds \quad
\text{and} \quad
L^{2} - 4\pi A \leq \frac{AL}{\, 4\pi \,} \int_{\mathcal{C}} \frac{1}{\, {\, \kappa \,}^{2} \, } {\left(\frac{d\kappa}{ds} \right)}^{2} ds.
\]

\section{Higher order Wirtinger-type inequalities}

Throughout this paper, ${f}^{(k)}(t)$ denotes the $k$-th derivative of the function $f(t)$ with respect to t.
In this section, we will prove various higher order Wirtinger-type inequalities.

\begin{prop} [\textbf{Higher order Wirtinger inequalities}] \label{Wirs0}
Let $m \in \mathbb N $ and $f : \mathbb{R} \to \mathbb{R}$ be a $2\pi$-periodic function of class $C^{m}$ with zero mean
\begin{equation} \label{zero mean0}
\int_{0}^{\, 2\pi \,} \, f(t) \, dt=0.
\end{equation}
\begin{enumerate}
\item[\textbf{(a)}]
We have the inequality
\begin{equation} \label{wirhigherfrac}
0 \leq \sum_{k=0}^{m} {\mathbf{c}}_{m, k} \int_{0}^{\, 2\pi \,} { {f}^{(k)} (t) }^{2} \, dt.
\end{equation}
Here, ${\mathbf{c}}_{m, k}$ are the central factorial numbers of even indices of the first kind \cite[(1.11)]{GelineauZeng2010} , explicitly given by the coefficients of the $m$-th degree polynomial
\begin{equation*} \label{prodpolynomial1}
Q_{m}(t) := \prod_{j=1}^{m} \left(t-{j}^{2} \right)
= \sum_{k=0}^{m} {\mathbf{c}}_{m, k} {t}^{k}.
\end{equation*}
\item[\textbf{(b)}] We have the inequality
\begin{equation} \label{wirhigher0}
0 \leq \sum_{k=0}^{m-1} {\lambda}_{m, k} \int_{0}^{\, 2\pi \,} \left[ { {f}^{(k+1)} (t) }^{2} - { {f}^{(k)} (t) }^{2} \right] dt.
\end{equation}
Here, ${\lambda}_{m, k}$ are the coefficients of the $(m-1)$-th degree polynomial $P_{m}(t) $, defined by
\begin{equation*} \label{prodpolynomial2}
P_{1}(t)=1={\lambda}_{0, 0},  \quad P_{m}(t) := \prod_{j=2}^{m} \left(t-{j}^{2} \right)=\left(t-{2}^{2} \right) \cdots \left(t-{m}^{2} \right)= \sum_{k=0}^{m-1} {\lambda}_{m, k} {t}^{k},
\quad m \geq 2.
\end{equation*}
\item[\textbf{(c)}]
For $m \geq 2$, we have the inequality
\begin{equation} \label{wirhigher2}
0 \leq \prod_{j=2}^{m} \left(1 - j^2 \right) \int_{0}^{\, 2\pi \,} \left[ { {\dot{f}} (t) }^{2} - { {f}(t) }^{2} \right] dt + \sum_{k=1}^{m-1} {S}_{m, k} \int_{0}^{\, 2\pi \,} {\left({f}^{(k+1)} (t) + {f}^{(k-1)} (t) \right)}^{2} \, dt.
\end{equation}
Here, the constants ${S}_{m, 1}$, $\cdots$, ${S}_{m, m-1}$ are the coefficients of the $(m-2)$-th degree polynomial ${\mathcal{S}}_{m}(t) \in \mathbb{Z}\left[t\right]$, defined by
\begin{equation} \label{secondpolynomial}
{ \mathcal{S}}_{m}(t) := \frac{ P_{m}(t) - P_{m}(1) }{ t -1} = \sum_{k=1}^{m-1} \, {S}_{m, k} \, {t}^{k-1}.
\end{equation}
\end{enumerate}
The inequalities in (a), (b), (c) are equivalent. The equality holds in the three equivalent inequalities if and only if the function $f$ is the finite Fourier series of the form
\[
f(t)= \sum_{n=1}^{m} \left({\alpha}_{n} \cos \left(n t \right) + {\beta}_{n} \sin \left(n t \right)\right)
\]
for some constants ${\alpha}_{1}, \cdots, {\alpha}_{n}, {\beta}_{1}, \cdots, {\beta}_{n} \in \mathbb{R}$.
\end{prop}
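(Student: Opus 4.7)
The plan is to reduce everything to Parseval's identity applied to the Fourier series of $f$. Since $f$ is $2\pi$-periodic, of class $C^{m}$, and has zero mean, it admits an expansion
\[
f(t) = \sum_{n=1}^{\infty} \bigl(\alpha_{n}\cos(nt) + \beta_{n}\sin(nt)\bigr),
\]
with no constant term. The $C^{m}$ regularity lets me differentiate term by term up to order $m$, and Parseval gives
\[
\int_{0}^{2\pi}\bigl(f^{(k)}(t)\bigr)^{2}\,dt \;=\; \pi\sum_{n=1}^{\infty} n^{2k}\bigl(\alpha_{n}^{2}+\beta_{n}^{2}\bigr), \qquad 0 \le k \le m.
\]

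For part (a), I would substitute this into the left side of (\ref{wirhigherfrac}) and swap sums to obtain
\[
\sum_{k=0}^{m}\mathbf{c}_{m,k}\int_{0}^{2\pi}\bigl(f^{(k)}\bigr)^{2}\,dt \;=\; \pi\sum_{n=1}^{\infty}\bigl(\alpha_{n}^{2}+\beta_{n}^{2}\bigr)\,Q_{m}(n^{2}).
\]
Because $Q_{m}(n^{2})=\prod_{j=1}^{m}(n^{2}-j^{2})$ vanishes for $n\in\{1,\dots,m\}$ and is strictly positive for $n\ge m+1$, non-negativity follows, and equality forces $\alpha_{n}=\beta_{n}=0$ for all $n\ge m+1$, yielding exactly the claimed trigonometric polynomial.

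For parts (b) and (c), I expect the entire content is the algebraic observation that the Parseval spectral weights collapse to the same expression $Q_{m}(n^{2})$. For (b) I would compute
\[
\sum_{k=0}^{m-1}\lambda_{m,k}\bigl(n^{2(k+1)}-n^{2k}\bigr) \;=\; (n^{2}-1)\,P_{m}(n^{2}) \;=\; Q_{m}(n^{2}),
\]
so (b) spectrally coincides with (a). For (c), integration by parts using periodicity gives $\int_{0}^{2\pi} f^{(k+1)}f^{(k-1)}\,dt = -\int_{0}^{2\pi}(f^{(k)})^{2}\,dt$, whence
\[
\int_{0}^{2\pi}\bigl(f^{(k+1)}+f^{(k-1)}\bigr)^{2}\,dt \;=\; \pi\sum_{n=1}^{\infty} n^{2(k-1)}(n^{2}-1)^{2}\bigl(\alpha_{n}^{2}+\beta_{n}^{2}\bigr).
\]
Combining with $\int[\dot f^{2}-f^{2}]\,dt = \pi\sum(\alpha_{n}^{2}+\beta_{n}^{2})(n^{2}-1)$ and the defining identity $(n^{2}-1)\mathcal{S}_{m}(n^{2})=P_{m}(n^{2})-P_{m}(1)$ from (\ref{secondpolynomial}), the bracket assembles into
\[
P_{m}(1)(n^{2}-1) + (n^{2}-1)^{2}\mathcal{S}_{m}(n^{2}) \;=\; (n^{2}-1)P_{m}(n^{2}) \;=\; Q_{m}(n^{2}),
\]
so (c) also reduces to (a). This simultaneously establishes the three inequalities and their equivalence, with a single common equality case inherited from (a).

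The main obstacle is not analytic but combinatorial: recognizing that $Q_{m}$, $(t-1)P_{m}$, and $P_{m}(1)+(t-1)\mathcal{S}_{m}$ are literally the same polynomial in $n^{2}$, so that the three weighted sums are identical after applying Parseval. A minor technical point I would address at the outset is justifying the interchange of summation and integration, which follows from the absolute convergence of the spectral sums $\sum n^{2k}(\alpha_{n}^{2}+\beta_{n}^{2})$ for $k\le m$ under the $C^{m}$ hypothesis (or, alternatively, by first proving the inequalities for trigonometric polynomials and passing to the limit).
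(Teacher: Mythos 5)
Your proposal is correct and follows essentially the same route as the paper: expand $f$ in a zero-mean Fourier series, apply Parseval, and observe that the weights in (a), (b), (c) all collapse to $Q_m(n^2)=\prod_{j=1}^m(n^2-j^2)$, which vanishes for $n\le m$ and is positive for $n\ge m+1$, giving both the inequalities and the equality characterization. The only cosmetic difference is in part (c), where you compute the spectral weight directly via Parseval and the identity $(t-1)\mathcal{S}_m(t)=P_m(t)-P_m(1)$, while the paper reaches the same conclusion by the equivalent Abel-summation/telescoping argument with $\lambda_{m,k}=S_{m,k}-S_{m,k+1}$ and the same integration by parts.
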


\begin{proof}
The proof is inspired by the Hurwitz method using Fourier series. In the case where $m=1$, the inequalities in \eqref{wirhigherfrac} and \eqref{wirhigher0} are the classical
Wirtinger inequality. (For instance, see Chavel's monograph \cite[p. 8]{Chavel2001}.)
By the hypothesis \eqref{zero mean0}, the Fourier series of $f $ is given by
\begin{eqnarray*} \label{zero mean fourier 1}
f(t) \sim
\sum_{n\in \mathbb Z\setminus\{0\}} a_{n} e^{i n t}.
\end{eqnarray*}
More generally \cite[p. 43]{SteinShakarchi2011}, for $0\le k\le m$,
\begin{eqnarray*}
f^{(k)}(t) &\sim&
\sum_{n\in \mathbb Z\setminus\{0\}} a_{n} (in)^k e^{i n t}.
\end{eqnarray*}
By the Parseval identity \cite[p. 80]{SteinShakarchi2011}, we have
\begin{equation*} \label{zero mean fourier gernal}
\frac{1}{\,2\pi\,}  \int_{0}^{\, 2\pi \,} \, {{f}^{(k)}(t)}^{2} \, dt = \sum_{n\in\mathbb Z\setminus\{0\}} n^{2k} |a_n|^2.
\end{equation*}
Using the definition of $Q_{m}(t)$, we obtain the inequality \eqref{wirhigherfrac}
\begin{eqnarray*} \label{fracwirhigherproof}
\frac{1}{\,2\pi\,}  \sum_{k=0}^{m} {\mathbf{c}}_{m, k} \int_{0}^{\, 2\pi \,} { {f}^{(k)} (t) }^{2} \, dt
&=& \sum_{k=0}^{m} {\mathbf{c}}_{m, k} \sum_{n\in\mathbb Z\setminus\{0\}} n^{2k} |a_n|^2 \\
&=& \sum_{n\in\mathbb Z\setminus\{0\}} \, \left[ \sum_{k=0}^{m} {\mathbf{c}}_{m, k} n^{2k} \right] \, |a_n|^2 \\
&=& \sum_{n\in\mathbb Z\setminus\{0\}} Q_{m} \left(n^{2} \right) |a_n|^2 \\
&=& \sum_{n\in\mathbb Z\setminus\{0\}} \left(n^{2} - {1}^{2} \right) \left({n}^{2}-{2}^{2} \right) \cdots \left({n}^2-{m}^{2} \right) |a_n|^2 \\
&=&  \sum_{|n|\ge m+1} \left(n^{2} - {1}^{2} \right) \left({n}^{2}-{2}^{2} \right) \cdots \left({n}^2-{m}^{2} \right) |a_n|^2 \\
&\geq& 0.
\end{eqnarray*}
Similarly, using the definition of $P_{m}(t)$, we obtain the inequality \eqref{wirhigher0}
\begin{eqnarray*} \label{wirhigherproof}
\frac{1}{\,2\pi\,}  \sum_{k=0}^{m-1} \, \int_{0}^{\, 2\pi \,} \, {\lambda}_{m, k} \left[ \, { {f}^{(k+1)} (t) }^{2} - { {f}^{(k)} (t) }^{2} \, \right] \, dt
&=& \sum_{k=0}^{m-1} \, {\lambda}_{m, k} \, \sum_{n\in\mathbb Z\setminus\{0\}} \left(n^{2(k+1)} - n^{2k} \right) |a_n|^2 \\
&=& \sum_{n\in\mathbb Z\setminus\{0\}} \, \left[ \left(n^{2} - 1 \right) \sum_{k=0}^{m-1} \, {\lambda}_{m, k} \, n^{2k} \right] \, |a_n|^2 \\
&=& \sum_{n\in\mathbb Z\setminus\{0\}} \left(n^{2} - 1 \right) P_{m} \left(n^{2} \right) |a_n|^2 \\
&=& \sum_{n\in\mathbb Z\setminus\{0\}} \left(n^{2} - {1}^{2} \right) \left({n}^{2}-{2}^{2} \right) \cdots \left({n}^2-{m}^{2} \right) |a_n|^2 \\
&=&  \sum_{|n|\ge m+1} \left(n^{2} - {1}^{2} \right) \left({n}^{2}-{2}^{2} \right) \cdots \left({n}^2-{m}^{2} \right) |a_n|^2 \\
&\geq &0.
\end{eqnarray*}
Now, we verify the
inequality \eqref{wirhigher2}, which can be written as
\begin{equation} \label{wirhigher2here}
0 \leq \sum_{k=1}^{m-1} {S}_{m, k} \int_{0}^{\, 2\pi \,} {\left({f}^{(k+1)} (t) + {f}^{(k-1)} (t) \right)}^{2} \, dt
+ {S}_{m, 0} \int_{0}^{\, 2\pi \,} \left[ { {\dot{f}} (t) }^{2} - { {f}(t) }^{2} \right] dt.
\end{equation}
Here, ${S}_{m, 0} := \prod_{j=2}^{m} \left(1 - j^2 \right) = P_{m}(1)$. We will show that the inequality \eqref{wirhigher2here} is equivalent to
\eqref{wirhigher0}, i.e.
\[
\sum_{k=0}^{m-1} {\lambda}_{m, k} J_{k} \geq 0,
\]
where we define the integral
\[
J_{k} = \int_{0}^{\, 2\pi \,} \, \left[ \, { {f}^{(k+1)} (t) }^{2} - { {f}^{(k)} (t) }^{2} \, \right] \, dt.
\]
By the definitions of ${\mathcal{S}}_{m}(t)$ and $P_{m}(t)$, we obtain the telescoping series
\begin{equation*} \label{tele}
\sum_{k=0}^{m-1} {\lambda}_{m, k} {t}^{k}
= P_{m}(t) = \left(t -1 \right) {\mathcal{S}}_{m}(t) + P_{m}(1)
= \left(t -1 \right) {\mathcal{S}}_{m}(t) + {S}_{m, 0}
= \sum_{k=0}^{m-1} \, \left({S}_{m, k} - {S}_{m, k+1} \right) t^{k},
\end{equation*}
where ${S}_{m, m}:=0$. Comparing the coefficients in both sides yields
\begin{equation*} \label{rec03}
{\lambda}_{m, k} = {S}_{m, k} - {S}_{m, k+1}
\end{equation*}
for $k \in \{0, 1, \cdots, m-1\}$. Integration by parts gives
\[
\int_{0}^{\, 2\pi \,} {\left({f}^{(k+1)} (t) + {f}^{(k-1)} (t) \right)}^{2} \, dt
= \int_{0}^{\, 2\pi \,} \, \left[ \, { {f}^{(k+1)} (t) }^{2} - { {f}^{(k)} (t) }^{2} \, \right] \, dt -
\int_{0}^{\, 2\pi \,} \, \left[ \, { {f}^{(k)} (t) }^{2} - { {f}^{(k-1)} (t) }^{2} \, \right] \, dt = J_{k} - J_{k-1}.
\]
Using the fact that $S_{m,m}=0$, we have the telescoping series 
\begin{eqnarray*}
0&\le&\sum_{k=0}^{m-1} {\lambda}_{m, k} J_{k} =  \sum_{k=0}^{m-1} \left({S}_{m, k} - {S}_{m, k+1} \right) J_{k}
=  S_{m,0}J_0+\sum_{k=1}^{m-1}S_{m,k}(J_k-J_{k-1}) \\
&=& \sum_{k=1}^{m-1} {S}_{m, k} \int_{0}^{\, 2\pi \,} {\left({f}^{(k+1)} (t) + {f}^{(k-1)} (t) \right)}^{2} \, dt
+ {S}_{m, 0} \int_{0}^{\, 2\pi \,} \left[ { {\dot{f}} (t) }^{2} - { {f}(t) }^{2} \right] dt.
\end{eqnarray*}
This completes the proof of \eqref{wirhigher2here}. The above also shows the equalities
\begin{equation*}
\begin{split}
\sum_{k=0}^{m} {\mathbf{c}}_{m, k} \int_{0}^{\, 2\pi \,} { {f}^{(k)} (t) }^{2} \, dt =& \sum_{k=0}^{m-1} \, \int_{0}^{\, 2\pi \,} \, {\lambda}_{m, k} \left[ \, { {f}^{(k+1)} (t) }^{2} - { {f}^{(k)} (t) }^{2} \, \right] \, dt\\
=&\sum_{k=1}^{m-1} {S}_{m, k} \int_{0}^{\, 2\pi \,} {\left({f}^{(k+1)} (t) + {f}^{(k-1)} (t) \right)}^{2} \, dt
+ {S}_{m, 0} \int_{0}^{\, 2\pi \,} \left[ { {\dot{f}} (t) }^{2} - { {f}(t) }^{2} \right] dt.
\end{split}
\end{equation*}
We conclude that the three inequalities are equivalent.
\end{proof}

\begin{prop} \label{Wirs}
Let $f : \mathbb{R} \to \mathbb{R}$ be a $2\pi$-periodic $C^{m}$ function with zero mean.
 With the assumptions and notation of Proposition \ref{Wirs0}, the following inequalities hold.
\begin{enumerate}
\item[\textbf{(a)}]
\begin{equation}\label{wirhigher a}
0\le
\sum_{k=0}^{m}\mathbf{c}_{m,k}\int_{0}^{\, 2\pi \,}{f^{(k)}(t)}^2dt
\le
\frac{1}{(m+1)^2}\sum_{k=0}^{m}\mathbf{c}_{m,k}\int_{0}^{\, 2\pi \,}{f^{(k+1)}(t)}^2 dt.
\end{equation}
\item[\textbf{(b)}]
\begin{equation*} \label{wirhigher}
0 \leq \sum_{k=0}^{m-1} {\lambda}_{m, k} \int_{0}^{\, 2\pi \,} \left[ { {f}^{(k+1)} (t) }^{2} - { {f}^{(k)} (t) }^{2} \right] dt \leq
\frac{1}{ {\left(m+1 \right)}^{2} } \sum_{k=0}^{m-1} {\lambda}_{m, k} \int_{0}^{\, 2\pi \,} \left[ { {f}^{(k+2)} (t) }^{2} - { {f}^{(k+1)} (t) }^{2} \right] dt.
\end{equation*}
\item[\textbf{(c)}]
\begin{equation}\label{wirhigher c}
\begin{split}
0\le&S_{m, 0} \int_{0}^{2 \pi} \left(\dot{f}(t)^{2}-f(t)^{2} \right) d t +\sum_{k=1}^{m-1} S_{m, k} \int_{0}^{2 \pi}\left(f^{(k+1)}(t)+f^{(k-1)}(t)\right)^{2} d t\\
\le& \frac{1}{(m+1)^{2}}\left[S_{m,0} \int_{0}^{2 \pi} \left(\dot{f}(t)^{2}-f(t)^{2} \right) d t+\sum_{k=0}^{m-1} S_{m, k} \int_{0}^{2 \pi}\left(f^{(k+2)}(t)+f^{(k)}(t)\right)^{2} d t\right].
\end{split}
\end{equation}
\end{enumerate}
\end{prop}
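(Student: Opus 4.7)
The plan is to carry the Fourier-series argument of Proposition \ref{Wirs0} through to both sides of each new inequality. Writing $f\sim\sum_{n\in\mathbb Z\setminus\{0\}}a_n e^{int}$, the proof of Proposition \ref{Wirs0} has already identified each of the three LHS expressions in (a), (b), (c) with the common quantity
$$2\pi\sum_{|n|\ge m+1}Q_m(n^2)\,|a_n|^2,$$
so only the upper-bound halves require new work.

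My strategy is to rewrite each bracketed RHS (i.e., the right-hand side stripped of the prefactor $1/(m+1)^2$) in the Parseval form $2\pi\sum_{|n|\ge m+1}n^{2}\,Q_m(n^2)|a_n|^2$. Once this identification is achieved, each desired upper bound collapses to the pointwise estimate
$$0\le Q_m(n^2)\le \frac{n^{2}}{(m+1)^2}\,Q_m(n^2),\qquad |n|\ge m+1,$$
which is immediate from $Q_m(n^2)\ge 0$ on this range together with $n^2\ge (m+1)^2$. As a by-product one also sees that (a), (b), (c) are mutually equivalent, since all three reduce to the same Fourier-side statement.

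For part (a) the identification is immediate from Parseval, since $\int(f^{(k+1)})^2\,dt = 2\pi\sum n^{2k+2}|a_n|^2$, so multiplying by $\mathbf{c}_{m,k}$ and summing factors an $n^2$ out front and yields $2\pi\sum n^2 Q_m(n^2)|a_n|^2$. For part (b) the same Parseval shift, combined with the identity $(n^2-1)P_m(n^2)=Q_m(n^2)$ already used in Proposition \ref{Wirs0}, produces the same expression. For part (c) I would use $f^{(k+2)}+f^{(k)}\sim\sum a_n(in)^k(1-n^2)e^{int}$, whence $\int(f^{(k+2)}+f^{(k)})^2\,dt = 2\pi\sum n^{2k}(n^2-1)^2|a_n|^2$; combining this with the observation $\sum_{k=0}^{m-1}S_{m,k}n^{2k}=S_{m,0}+n^{2}\mathcal{S}_m(n^2)$ and the telescoping identity $P_m(t)=(t-1)\mathcal{S}_m(t)+S_{m,0}$ recorded in the proof of Proposition \ref{Wirs0}, a short algebraic rearrangement collapses the bracketed RHS of (c) to $2\pi\sum n^2(n^2-1)P_m(n^2)|a_n|^2 = 2\pi\sum n^2 Q_m(n^2)|a_n|^2$.

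The only mildly delicate step is the part (c) simplification, where the outer sum on the RHS begins at $k=0$ rather than $k=1$, so the coefficient $S_{m,0}$ effectively appears twice and must be combined with the $S_{m,0}\int(\dot f^2 - f^2)\,dt$ term before the telescoping collapses to $n^2 P_m(n^2)(n^2-1)$. Every other step is a direct reuse of the Parseval computation from the proof of Proposition \ref{Wirs0}, followed by the termwise comparison on $|n|\ge m+1$.
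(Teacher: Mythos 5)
Your argument is correct, but it is organized differently from the paper's. The paper does not return to Fourier space at all in this proposition: it applies Proposition \ref{Wirs0} with $m$ replaced by $m+1$ and then uses the coefficient recurrences $\mathbf{c}_{m+1,k}=\mathbf{c}_{m,k-1}-(m+1)^2\mathbf{c}_{m,k}$, $\lambda_{m+1,k}=\lambda_{m,k-1}-(m+1)^2\lambda_{m,k}$, and $S_{m+1,k}=S_{m,k-1}-(m+1)^2 S_{m,k}$ (from $Q_{m+1}(t)=(t-(m+1)^2)Q_m(t)$, $P_{m+1}(t)=(t-(m+1)^2)P_m(t)$, $\mathcal S_{m+1}(t)=(t-(m+1)^2)\mathcal S_m(t)+S_{m,0}$) to split the level-$(m+1)$ nonnegative quantity into ``bracketed RHS minus $(m+1)^2$ times LHS,'' which is exactly the second inequality in each of (a), (b), (c). You instead identify both sides directly with Fourier multipliers, showing each LHS equals $2\pi\sum_{|n|\ge m+1}Q_m(n^2)|a_n|^2$ and each bracketed RHS equals $2\pi\sum_{|n|\ge m+1}n^2Q_m(n^2)|a_n|^2$, and conclude by the termwise bound $Q_m(n^2)\le \frac{n^2}{(m+1)^2}Q_m(n^2)$ for $|n|\ge m+1$; your verifications of these identifications (the factorization $n^2Q_m(n^2)$ in (a), the identity $(n^2-1)P_m(n^2)=Q_m(n^2)$ in (b), and the rearrangement $S_{m,0}+(n^2-1)\bigl(S_{m,0}+n^2\mathcal S_m(n^2)\bigr)=n^2P_m(n^2)$ behind (c), where the $k=0$ term indeed doubles the role of $S_{m,0}$) are all correct. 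What the paper's bootstrap buys is brevity and a purely formal derivation from the already-proved case $m+1$; what your route buys is transparency: it shows the three right-hand sides coincide on the Fourier side, makes clear that $(m+1)^2$ is sharp because $\min_{|n|\ge m+1}n^2=(m+1)^2$, and yields the equality case ($a_n=0$ for $|n|\ge m+2$, i.e.\ $f$ a trigonometric polynomial of degree $m+1$) for free, which the paper only states in passing for (a). The only caveat, shared equally by the paper's proof, is that the right-hand sides involve $f^{(m+1)}$, so one implicitly needs enough regularity (e.g.\ $f\in C^{m+1}$, or $f^{(m)}$ absolutely continuous with $f^{(m+1)}\in L^2$) for the Parseval identities at order $m+1$; this is a gap in the statement's hypotheses rather than in your argument.
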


\begin{proof}
By the identity $Q_{m+1}(t)=(t-(m+1)^2)Q_m(t)$, we have the recurrence relation
\begin{align*}
{\mathbf{c}}_{m+1,k} ={\mathbf{c}}_{m,k-1}- (m+1)^2{\mathbf{c}}_{m,k}.
\end{align*}
Here, we use the standard convention that $\mathbf{c}_{m,k}=0$ for $k\notin\{0,\cdots,m\}$. By replacing $m$ with $m+1$, the inequality \eqref{wirhigherfrac} then gives
\begin{align*}
0\le& \sum_{k=0}^{m+1} \mathbf{c}_{m+1,k}\int_{0}^{\, 2\pi \,} f^{(k)}(t)^2dt
= \sum_{k=0}^{m+1} \left({\mathbf{c}}_{m,k-1}- (m+1)^2{\mathbf{c}}_{m,k}\right)\int_{0}^{\, 2\pi \,}{ f^{(k)}(t)}^2dt\\
=&\sum_{k=0}^{m}\mathbf{c}_{m,k}\int_{0}^{\, 2\pi \,}{f^{(k+1)}(t)}^2 dt-(m+1)^2\sum_{k=0}^{m}\mathbf{c}_{m,k}\int_{0}^{\, 2\pi \,}{f^{(k)}(t)}^2dt.
\end{align*}
This together with \eqref{wirhigherfrac} proves the second inequality in \eqref{wirhigher a}. The second inequality of \eqref{wirhigher a} becomes an equality when $f$ is equal to the finite Fourier series $\sum_{n=1}^{m+1} {\alpha}_{n} \cos \left(n t \right) + {\beta}_{n} \sin \left(n t \right)$.

The proofs for (b) and (c) are similar. The proof of (b) requires the recurrence relation ${\lambda}_{m+1, k} = \lambda_{m, k-1} - {\left(m+1 \right)}^{2} \lambda_{m, k}$,
which follows from  the identity $P_{m+1}(t)=(t-(m+1)^2) P_{m}(t)$. The proof of (c) requires the recurrence relation  $S_{m+1, k}=S_{m, k-1}-(m+1)^{2} S_{m, k}$, for $k\ge 1$,
which is obtained from  the identity $\mathcal S_{m+1}(t)=\left(t-(m+1)^{2}\right)  \mathcal S_m (t)+S_{m, 0}$.
\end{proof}

\begin{prop}\label{Wir3}
If $h$ is a $2\pi$-periodic function of class ${C}^{m}$, then
\begin{equation} \label{prop3}
\begin{split}
0\le&
\frac{(-1)^m}{2}(m-1)!(m+1)!\int_{0}^{\, 2\pi \,}\left(h(t)^2-\dot{h}(t)^2\right)dt+\frac{(-1)^{m-1}(m!)^2}{\, 2\pi \,}\left(\int_{0}^{\, 2\pi \,}h(t)dt\right)^2\\
&+\sum_{k=1}^{m-1}S_{m,k}\int_{0}^{\, 2\pi \,}\left(h^{(k+1)}(t)+h^{(k-1)}(t)\right)^2 dt
\end{split}
\end{equation}
where $S_{m,k}$ are given by \eqref{secondpolynomial}. The equality holds if and only if $h(t)=\sum_{n=0}^{m}\left(\alpha_n\cos (nt)+\beta_n\sin(n t)\right)$.

\end{prop}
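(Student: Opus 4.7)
The natural plan is to reduce Proposition \ref{Wir3} to Proposition \ref{Wirs0}(c), which applies only to zero-mean functions, by subtracting off the mean. I would set $\bar{h} := \frac{1}{2\pi}\int_{0}^{2\pi} h(t)\,dt$ and let $f := h - \bar{h}$, so that $f$ is a $2\pi$-periodic $C^{m}$ function with zero mean, and $f^{(k)} = h^{(k)}$ for every $k \geq 1$ while $f = h - \bar h$. Proposition \ref{Wirs0}(c) applied to $f$ then reads
\[
0 \leq S_{m,0}\int_{0}^{2\pi}\!\bigl(\dot f(t)^{2}-f(t)^{2}\bigr)\,dt + \sum_{k=1}^{m-1} S_{m,k}\int_{0}^{2\pi}\!\bigl(f^{(k+1)}(t)+f^{(k-1)}(t)\bigr)^{2}\,dt,
\]
and my plan is to rewrite each integral on the right in terms of $h$.

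A direct expansion, using that $\bar h$ is a constant, yields
\[
\int_{0}^{2\pi}\!\bigl(\dot f^{2}-f^{2}\bigr)\,dt = \int_{0}^{2\pi}\!\bigl(\dot h^{2}-h^{2}\bigr)\,dt + 2\pi\bar h^{2}.
\]
For $k \geq 2$ there is nothing to do since $f^{(k+1)}+f^{(k-1)} = h^{(k+1)}+h^{(k-1)}$. The only index that requires care is $k=1$, where $f^{(0)} = h-\bar h$; expanding the square and using $\int_{0}^{2\pi} h^{(2)}\,dt = 0$ (by periodicity of $h$) gives the compensating identity
\[
\int_{0}^{2\pi}\!\bigl(f^{(2)}+f\bigr)^{2}\,dt = \int_{0}^{2\pi}\!\bigl(h^{(2)}+h\bigr)^{2}\,dt - 2\pi\bar h^{2}.
\]
Substituting these identities and collecting the $\bar h^{2}$ terms produces an overall coefficient of $S_{m,0}-S_{m,1}$, which by the relation $\lambda_{m,k}=S_{m,k}-S_{m,k+1}$ established in the proof of Proposition \ref{Wirs0} equals $\lambda_{m,0} = P_{m}(0)$.

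The remaining task is to identify the constants with the explicit factorials in \eqref{prop3}. I would compute $\lambda_{m,0} = P_{m}(0) = \prod_{j=2}^{m}(-j^{2}) = (-1)^{m-1}(m!)^{2}$ and $S_{m,0} = P_{m}(1) = \prod_{j=2}^{m}(1-j^{2}) = (-1)^{m-1}\prod_{j=2}^{m}(j-1)(j+1) = \tfrac{(-1)^{m-1}}{2}(m-1)!(m+1)!$, so that $S_{m,0}\int(\dot h^{2}-h^{2})\,dt$ matches the first term $\tfrac{(-1)^{m}}{2}(m-1)!(m+1)!\int(h^{2}-\dot h^{2})\,dt$ in \eqref{prop3}, and $\lambda_{m,0}\cdot 2\pi\bar h^{2} = \tfrac{(-1)^{m-1}(m!)^{2}}{2\pi}\bigl(\int h\bigr)^{2}$ matches the second term. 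For the equality case, Proposition \ref{Wirs0}(c) applied to $f$ gives equality iff $f(t) = \sum_{n=1}^{m}(\alpha_{n}\cos(nt)+\beta_{n}\sin(nt))$, hence $h = \bar h + f$ is exactly a Fourier sum with modes $n = 0,1,\dots,m$, as claimed.

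The calculations are essentially routine. The one subtlety worth flagging as the main bookkeeping obstacle is the asymmetry in the $k=1$ term: because $f^{(0)}\ne h^{(0)}$ while $f^{(k)}=h^{(k)}$ for $k\ge 1$, one gets a $-2\pi\bar h^{2}$ correction from $k=1$ that combines with the $+2\pi\bar h^{2}$ from the $S_{m,0}$ term into the coefficient $S_{m,0}-S_{m,1}=\lambda_{m,0}$; verifying that this is exactly $(-1)^{m-1}(m!)^{2}$ is what makes the reduction produce the factorial constants stated in \eqref{prop3}.
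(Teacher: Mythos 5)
Your proposal is correct and follows essentially the same route as the paper's own proof: subtract the mean $c=\frac{1}{2\pi}\int_0^{2\pi}h$, apply the zero-mean inequality \eqref{wirhigher2} to $f=h-c$, use the two correction identities for $\int(\dot f^2-f^2)$ and $\int(\ddot f+f)^2$, and identify $S_{m,0}$ and $S_{m,0}-S_{m,1}=\lambda_{m,0}=P_m(0)$ with the stated factorial constants. Your computation $S_{m,0}=P_m(1)=\tfrac{(-1)^{m-1}}{2}(m-1)!(m+1)!$ is in fact the correct sign (consistent with $S_{2,0}=-3$, $S_{3,0}=24$), and it matches \eqref{prop3} because the first term there carries the reversed integrand $h^2-\dot h^2$.
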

\begin{proof}
Let $c= \frac{1}{\, 2\pi \, } \int_{0}^{\, 2\pi \,} \, {h(t)} \, dt$ be the mean value of $h$. Then the function $f(t):=h(t)-c$ has mean zero.
By a direct computation, we have $S_{m,0}=\frac{(-1)^m}{2}(m-1)!(m+1)!$, ${S_{m,0}-S_{m,1}}=(-1)^{m-1}(m!)^2$,
\begin{align*}
\int_{0}^{2 \pi}\left[\dot{f}(t)^{2}-f(t)^{2}\right] d t= \int_{0}^{2 \pi}\left[\dot{h}(t)^{2}-h(t)^{2}\right] d t+\frac{1}{2 \pi}\left(\int _0^{2\pi}h(t)dt\right)^{2},
\end{align*}
and
\begin{align*}
\int_{0}^{2 \pi}\left(\ddot f(t)+f(t)\right)^{2} d t
=\int_{0}^{2 \pi}\left(\ddot h(t)+h(t)\right)^{2} d t-\frac{1}{2 \pi}\left(\int_{0}^{2 \pi} h(t) d t\right)^{2}.
\end{align*}
Plugging $f(t)=h(t)-c$ into \eqref{wirhigher2} and using the above relations, we can get the result.
\end{proof}

\begin{exmp}
For the convenience of the readers, we write down some special cases of Proposition \ref{Wirs0}, Proposition \ref{Wirs} and Proposition \ref{Wir3} when $m$ is small. These inequalities
will be used in Section \ref{Gapplications}.
\begin{enumerate}
\item[]
\item
We take $m=2$ in Proposition \ref{Wirs0}. We have $P_{2}(t) = t -2^{2}$,  ${\lambda}_{2, 0}=-4$, ${\lambda}_{2, 1}=1$, ${S}_{2, 0}=-3$, and ${S}_{2, 1}=1$.
If $f$ is $ C^2$ and has mean zero, the inequality \eqref{wirhigher2here} implies
\begin{equation} \label{wir01}
0 \leq \int_{0}^{\, 2\pi \,} \, \left({\dot{f}(t)}^{2} - {f(t)}^{2}\right) \, dt \leq \frac{1}{\, 3 \, } \int_{0}^{\, 2\pi \,} \, { \left(\, {\ddot{f}(t)} + f(t) \, \right) }^{2} \, dt.
\end{equation}
The inequality on the left-hand side in \eqref{wir01}, the classical Wirtinger inequality, becomes an equality if and only if
%\begin{equation*} \label{wir01equal}
$f(t)={\alpha}_{1} \cos t + {\beta}_{1} \sin t$.
%\end{equation*}
The inequality on the right-hand side in \eqref{wir01}, the reverse Wirtinger inequality, becomes an equality if and only if $f$ has the following expansion:
\begin{equation} \label{wir02equal}
f(t)= \sum_{n=1}^{2} \left({\alpha}_{n} \cos \left(n t \right) + {\beta}_{n} \sin \left(n t \right)\right).
\end{equation}
\item
We take $m=3$ in Proposition \ref{Wirs0}. We have $P_{3}(t) = \left(t -2^{2} \right)\left(t -3^{2} \right)=t^2 -13t + 36$, ${\lambda}_{3, 0}=36$, ${\lambda}_{3, 1}=-13$, ${\lambda}_{3, 2}=1$, ${S}_{3, 0}=24$, ${S}_{3, 1}=-12$, and ${S}_{3, 2}=1$. Assume that
$f$ is $C^3$ and has mean zero. It follows from the inequality \eqref{wirhigher c} that
\begin{equation} \label{wir02h}
\begin{split}
0 \leq&  \frac{1}{\, 3 \, } \int_{0}^{\, 2\pi \,} \, { \left(\, {\ddot{f}(t)} + f(t) \, \right) }^{2} \, dt - \int_{0}^{\, 2\pi \,} \, \left({\dot{f}(t)}^{2} - {f(t)}^{2}\right) \, dt  \\
 \leq&  \frac{1}{\, 36 \, }  \int_{0}^{\, 2\pi \,} \, { \left(\, {\dddot{f}(t)} + \dot{f}(t) \, \right) }^{2} \, dt
  -   \frac{1}{\, 3 \, }  \int_{0}^{\, 2\pi \,} \,  \left({\dot{f}(t)}^{2} - {f(t)}^{2}\right)   \, dt
  \end{split}
\end{equation}
\item[(3)]
Let $h : \mathbb{R} \to \mathbb{R}$ be  a $2\pi$-periodic function of class $C^{2}$. Taking $m=2$ in Proposition \ref{Wir3}, we have
\begin{equation} \label{wir01b}
\begin{split}
0 \leq&    {\left(\int_{0}^{\, 2\pi \,} \, {h(t)} \, dt \right)}^{2} - 2 \pi \int_{0}^{\, 2\pi \,} \, \left[ {h(t)}^{2} - {\dot{h}(t)}^{2} \right] \, dt \\
   \leq&   \frac{2 \pi}{\, 3 \, } \left[ \, \int_{0}^{\, 2\pi \,} \, { \left(\, {\ddot{h}(t)} + h(t) \, \right) }^{2} \, dt - \frac{1}{\, 2\pi \,} {\left(\int_{0}^{\, 2\pi \,} \, {h(t)} \, dt \right)}^{2} \, \right].
\end{split}
\end{equation}
\item[(4)]
Let $h : \mathbb{R} \to \mathbb{R}$ be  a $2\pi$-periodic function of class $C^{3}$. Taking $m=3$ in Proposition \ref{Wir3}, we have
\begin{equation} \label{wir02b}
\begin{split}
0 \leq&    \int_{0}^{\, 2\pi \,} \, { \left(\, {\ddot{h}(t)} + h(t) \, \right) }^{2} \, dt - \frac{1}{\, 2\pi \,} {\left(\int_{0}^{\, 2\pi \,} \, {h(t)} \, dt \right)}^{2}
 -    \frac{3} {\, 2 \pi\,} \left[     {\left(\int_{0}^{\, 2\pi \,} \, {h(t)} \, dt \right)}^{2} - 2 \pi \int_{0}^{\, 2\pi \,} \, \left[ {h(t)}^{2} - {\dot{h}(t)}^{2} \right] \, dt    \right]    \\
   \leq&   \frac{1}{\, 12 \,}  \left[      \int_{0}^{\, 2\pi \,} \, { \left(\, {\dddot{h}(t)} + {\dot{h}}(t) \, \right) }^{2} \, dt   - \frac{6}{\, \pi \,} \left( \,
     {\left(\int_{0}^{\, 2\pi \,} \, {h(t)} \, dt \right)}^{2} - 2 \pi \int_{0}^{\, 2\pi \,} \, \left[ {h(t)}^{2} - {\dot{h}(t)}^{2} \right] \, dt
   \, \right) \right].
\end{split}
\end{equation}
\end{enumerate}
\end{exmp}

\section{Geometric Applications of Wirtinger-type Inequalities} \label{Gapplications}

\begin{thm}\label{first thm}
Let $\mathcal{C}$ be a simple closed $C^2$ curve in ${\mathbb{R}}^{2}$ with length $L$ and a unit speed counter-clockwise
parametrization $\mathbf{X}(s)=\left(x(s), y(s) \right)$ with $s \in \left[0, L\right]$.
Assume that $\mathcal{C}=\partial \Omega$ is the boundary curve of a domain $\Omega \subset {\mathbb{R}}^{2}$ with the area $A$.
Let $\mathbf{T}(s)=\frac{d}{ds} \mathbf{X}(s)$ denote the unit tangent vector field on the curve $\mathcal{C}$. Let $\mathbf{N}=-J\mathbf{T}$ denote the outward-pointing unit normal vector field on the boundary curve $\mathcal{C}$, where $J$ denotes the counter-clockwise $\frac{\, \pi \, }{2}$-rotation. Let $\mathbf{G}$ denote the
the center of the gravity of the curve $\mathcal{C}$:
\begin{equation*} \label{centergravity}
\mathbf{G} =\frac{1}{\, L\, } \int_{\mathcal{C}} \mathbf{X} \, ds.
\end{equation*}
Let $\kappa(s)$ denote the signed curvature of the curve $\mathcal{C}$. Then we have the following geometric inequalities.
\begin{enumerate}
\item[\textbf{(a)}]
\begin{equation} \label{Reverse01}
0 \leq \left(L^2 - 4 \pi A \right) - \frac{\, 2 {\, \pi \,}^{2} \, }{L} \int_{\mathcal{C}} \, {\left\vert \,
\mathbf{X} - \mathbf{G} - {\left(\frac{L}{\, 2\pi \,} \right)} \mathbf{N} \, \right\vert}^{2} \, ds
\leq \frac{\, 2 {\, \pi \,}^{2} \, }{3L} \int_{\mathcal{C}} \, {\left\vert \, \mathbf{X} - \mathbf{G} - {\left(\frac{L}{\, 2\pi \,} \right)}^{2} \kappa \mathbf{N} \, \right\vert}^{2} \, ds
\end{equation}
\item[\textbf{(b)}]
\begin{equation} \label{Reverse02}
0 \leq \frac{\, {L}^{3} \, }{\, 4 {\pi}^{2} \, } - \int_{\mathcal{C}} {\left\vert \mathbf{X} - \mathbf{G}\, \right\vert}^{2} \, ds \leq \frac{ L^4 }{\,64 {\pi}^{4}\,}
\left(\int_{\mathcal{C}} {\, \kappa \,}^{2} \, ds - \frac{4 \pi^2}{L} \right).
\end{equation}
\end{enumerate}
An inequality in \eqref{Reverse01} or \eqref{Reverse02} becomes an equality if and only if the curve $\mathcal C$ is a circle.
\end{thm}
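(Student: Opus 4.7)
The plan is to rescale to the standard $2\pi$-periodic parametrization and apply the Wirtinger-type inequalities from Section 2 coordinate-wise. I would first translate so that $\mathbf{G}=\mathbf{0}$ and set $\tilde{x}(t):=x(\tfrac{L}{2\pi}t)$, $\tilde{y}(t):=y(\tfrac{L}{2\pi}t)$, which are $C^{2}$, $2\pi$-periodic, and mean zero; the unit-speed condition gives $\dot{\tilde{x}}^{2}+\dot{\tilde{y}}^{2}\equiv\tfrac{L^{2}}{4\pi^{2}}$, and Green's theorem gives $2A=\int_{0}^{2\pi}(\tilde{x}\dot{\tilde{y}}-\tilde{y}\dot{\tilde{x}})\,dt$. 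Using $\mathbf{N}=(y'(s),-x'(s))$ and $\mathbf{T}'=-\kappa\mathbf{N}$, the chain rule rescales these to $\tfrac{L}{2\pi}\mathbf{N}=(\dot{\tilde{y}},-\dot{\tilde{x}})$ and $(\tfrac{L}{2\pi})^{2}\kappa\mathbf{N}=-(\ddot{\tilde{x}},\ddot{\tilde{y}})$. A direct Hurwitz-style expansion would then produce the identity
\begin{equation*}
L^{2}-4\pi A=\pi\!\int_{0}^{2\pi}\!\bigl[(\dot{\tilde{y}}-\tilde{x})^{2}+(\dot{\tilde{x}}+\tilde{y})^{2}\bigr]dt+\pi\!\int_{0}^{2\pi}\!\bigl[(\dot{\tilde{x}}^{2}-\tilde{x}^{2})+(\dot{\tilde{y}}^{2}-\tilde{y}^{2})\bigr]dt,
\end{equation*}
whose first summand equals $\tfrac{2\pi^{2}}{L}\int_{\mathcal{C}}|\mathbf{X}-\mathbf{G}-\tfrac{L}{2\pi}\mathbf{N}|^{2}\,ds$ upon undoing the rescaling.

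To prove \eqref{Reverse01}, I would read off the lower bound directly: the second summand above is non-negative by the classical Wirtinger inequality (Proposition~\ref{Wirs0}, $m=1$) applied to $\tilde{x}$ and $\tilde{y}$. For the upper bound, I would apply the reverse Wirtinger inequality \eqref{wir01} to $\tilde{x}$ and $\tilde{y}$, bounding the same summand above by $\tfrac{\pi}{3}\int_{0}^{2\pi}[(\ddot{\tilde{x}}+\tilde{x})^{2}+(\ddot{\tilde{y}}+\tilde{y})^{2}]\,dt$, and then use the identification $-(\ddot{\tilde{x}},\ddot{\tilde{y}})=(\tfrac{L}{2\pi})^{2}\kappa\mathbf{N}$ to rewrite this as $\tfrac{2\pi^{2}}{3L}\int_{\mathcal{C}}|\mathbf{X}-\mathbf{G}-(\tfrac{L}{2\pi})^{2}\kappa\mathbf{N}|^{2}\,ds$.

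For \eqref{Reverse02}, I would show these inequalities are algebraic rearrangements of \eqref{Reverse01}. The integration-by-parts identity $\int_{\mathcal{C}}\kappa\,(\mathbf{X}-\mathbf{G})\cdot\mathbf{N}\,ds=L$, which follows immediately from $\kappa\mathbf{N}=-\mathbf{T}'$ and periodicity of $\mathbf{X}-\mathbf{G}$, expands the square to give
\begin{equation*}
\int_{\mathcal{C}}\bigl|\mathbf{X}-\mathbf{G}-(\tfrac{L}{2\pi})^{2}\kappa\mathbf{N}\bigr|^{2}\,ds=\int_{\mathcal{C}}|\mathbf{X}-\mathbf{G}|^{2}\,ds-\tfrac{L^{3}}{2\pi^{2}}+\tfrac{L^{4}}{16\pi^{4}}\int_{\mathcal{C}}\kappa^{2}\,ds,
\end{equation*}
while the Hurwitz identity rearranges to $L^{2}-4\pi A-\tfrac{2\pi^{2}}{L}\int|\mathbf{X}-\mathbf{G}-\tfrac{L}{2\pi}\mathbf{N}|^{2}\,ds=\tfrac{2\pi^{2}}{L}\bigl(\tfrac{L^{3}}{4\pi^{2}}-\int|\mathbf{X}-\mathbf{G}|^{2}\,ds\bigr)$. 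Substituting these into \eqref{Reverse01} produces \eqref{Reverse02} after arithmetic.

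For the equality cases, the equality statements of Proposition~\ref{Wirs0} force $\tilde{x},\tilde{y}$ to be trigonometric polynomials of degree at most one (in the Wirtinger step) or two (in the reverse Wirtinger step). Imposing $|\dot{\tilde{x}}+i\dot{\tilde{y}}|^{2}\equiv\mathrm{const}$ on such a finite Fourier expansion and matching the coefficients of the non-constant modes $e^{\pm it},e^{\pm 2it},e^{\pm 3it},e^{\pm 4it}$ shows that all Fourier coefficients at orders $\neq\pm 1$ must vanish, so $\mathcal{C}$ is a circle of radius $\tfrac{L}{2\pi}$ (using, in the degree-two case, that a simple closed curve cannot be a doubly-traversed circle). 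The main technical difficulty will be the bookkeeping of the $\tfrac{L}{2\pi}$-rescaling factors and the sign convention in $\mathbf{T}'=-\kappa\mathbf{N}$, both of which are needed for the Frenet-derived right-hand sides to match the stated expressions.
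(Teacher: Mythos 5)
Your proposal is correct and follows essentially the same route as the paper: pass to the rescaled $2\pi$-periodic parametrization, use the Hurwitz-type identity for $L^{2}-4\pi A$ together with the classical and reverse Wirtinger inequalities (the paper's \eqref{wir01}) applied coordinate-wise, obtain \eqref{Reverse02} from \eqref{Reverse01} by expanding the squares via the Minkowski formula and $2A=\int_{\mathcal C}\langle\mathbf X,\mathbf N\rangle\,ds$, and settle equality through the finite Fourier expansion combined with the constant-speed condition and simplicity of $\mathcal C$. The only cosmetic difference is that you re-derive the Chakerian/Sachs lower bounds directly from Wirtinger's inequality, where the paper cites the literature; the substance is identical.
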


\begin{proof} Without loss of generality, by a translation, we may assume that the center of the gravity of the curve $\mathcal{C}$ is the origin:
\begin{equation*} \label{zerogravity}
\mathbf{G} =\frac{1}{\, L\, } \int_{\mathcal{C}} \mathbf{X} \, ds = {O}_{{\mathbb{R}}^{2}}.
\end{equation*}
We first prove the inequalities \eqref{Reverse01}. The first estimation in \eqref{Reverse01} is the sharpened isoperimetric inequality
\begin{equation*} \label{ChakerianE}
\frac{\, 2 {\, \pi \,}^{2} \, }{L} \int_{\mathcal{C}} \, {\left\vert \,
\mathbf{X} - {\left(\frac{L}{\, 2\pi \,} \right)} \mathbf{N} \, \right\vert}^{2} \, ds \leq L^2 - 4 \pi A,
\end{equation*}
proved by G. D. Chakerian \cite{Chakerian1978}. The first inequality in \eqref{Reverse01} is equivalent to the first inequality in \eqref{Reverse02}. The 
equality holds if and only if the curve $\mathcal C$ is a circle \cite[Section 4.5]{DHT2010}. The second inequality
in \eqref{Reverse01} is equivalent to the reverse isoperimetric inequality
\[
L^2 - 4 \pi A \leq \frac{\, 2 {\, \pi \,}^{2} \, }{L} \int_{\mathcal{C}} \, {\left\vert \,
\mathbf{X} - {\left(\frac{L}{\, 2\pi \,} \right)} \mathbf{N} \, \right\vert}^{2} \, ds + \frac{\, 2 {\, \pi \,}^{2} \, }{3L} \int_{\mathcal{C}} \, {\left\vert \, \mathbf{X}
- {\left(\frac{L}{\, 2\pi \,} \right)}^{2} \kappa \mathbf{N} \, \right\vert}^{2} \, ds,
\]
or equivalently,
\begin{equation} \label{wir01cor}
\frac{\, 2 {\, \pi \,} \, }{L} \int_{\mathcal{C}} \, {\left\vert \,
\mathbf{X} - {\left(\frac{L}{\, 2\pi \,} \right)} \mathbf{N} \, \right\vert}^{2} \, ds + \frac{\, 2 {\, \pi \,} \, }{3L} \int_{\mathcal{C}} \, {\left\vert \, \mathbf{X}
- {\left(\frac{L}{\, 2\pi \,} \right)}^{2} \kappa \mathbf{N} \, \right\vert}^{2} \, ds - \frac{\, L^2 - 4 \pi A \, }{\, \pi \,}
\geq 0.
\end{equation}
To verify the estimation \eqref{wir01cor}, we introduce the new parameter $t=\frac{\, 2 \pi\, }{L} s$ on the curve $\mathcal{C}$ so that
\begin{equation} \label{arc}
{\left(\frac{dx}{dt} \right)}^{2} + {\left(\frac{dy}{dt} \right)}^{2} = { \left(\frac{L}{\, 2 \pi \, } \right)}^{2}.
\end{equation}
This induces the $2\pi$-periodic patch $\mathbf{X}(t)=\left(x(t), y(t) \right)$ of the closed curve $\mathcal{C}$.
We will see that the inequality \eqref{wir01cor} is the consequence of the second inequality in \eqref{wir01}, which can be viewed as a reverse Wirtinger inequality:
\begin{equation} \label{wir01here}
\int_{0}^{\, 2\pi \,} \, \left({\dot{f}(t)}^{2} - {f(t)}^{2} \right)\, dt \leq \frac{1}{\, 3 \, } \int_{0}^{\, 2\pi \,} \, { \left(\, {\ddot{f}(t)} + f(t) \, \right) }^{2} \, dt.
\end{equation}
We observe the following equalities.
\begin{enumerate}
\item[\textbf{(i)}]
\[
\begin{bmatrix}
\, \frac{d^{2} x}{ds^2} \, \\ \\ \frac{d^{2} y}{ds^2}
\end{bmatrix} = {\triangle}_{\mathcal{C}} \mathbf{X} = - \kappa \mathbf{N} = \kappa \left(J \mathbf{T} \right) = \kappa \begin{bmatrix}
\, - \frac{dy}{ds} \, \\ \\ \frac{dx}{ds}
\end{bmatrix}.
\]
\item[\textbf{(ii)}]
\begin{eqnarray*}
{I}_{1} &:=& \frac{\, 2 {\, \pi \,} \, }{L} \int_{\mathcal{C}} \, {\left\vert \,
\mathbf{X} - {\left(\frac{L}{\, 2\pi \,} \right)} \mathbf{N} \, \right\vert}^{2} \, ds \\
&=& \frac{\, 2 {\, \pi \,} \, }{L} \int_{0}^{L} \, {\left(x - \frac{L}{\, 2\pi\, } \frac{dy}{ds} \right)}^{2} + {\left(y + \frac{L}{\, 2\pi\, } \frac{dx}{ds} \right)}^{2} \, ds \\
&=& \int_{0}^{\, 2\pi \,} \, {\left(x- \frac{dy}{dt} \right)}^{2} + {\left(y+ \frac{dx}{dt} \right)}^{2} \, dt.
\end{eqnarray*}
\item[\textbf{(iii)}]
\begin{eqnarray*}
{I}_{2} &:=& \frac{\, 2 {\, \pi \,} \, }{L} \int_{\mathcal{C}} \, {\left\vert \, \mathbf{X}
- {\left(\frac{L}{\, 2\pi \,} \right)}^{2} \kappa \mathbf{N} \, \right\vert}^{2} \, ds \\
&=& \frac{\, 2 {\, \pi \,} \, }{L} \int_{0}^{L} \, {\left(x - {\left(\frac{L}{\, 2\pi \,} \right)}^{2} \kappa
\frac{dy}{ds} \right)}^{2} + {\left(y + {\left(\frac{L}{\, 2\pi \,} \right)}^{2} \kappa \frac{dx}{ds} \right)}^{2} \, ds \\
&=& \frac{\, 2 {\, \pi \,} \, }{L} \int_{0}^{L} \, {\left(x + {\left(\frac{L}{\, 2\pi \,} \right)}^{2} \frac{d^{2} x}{ds^2} \right)}^{2} + {\left(y + {\left(\frac{L}{\, 2\pi \,} \right)}^{2} \frac{d^{2} y}{ds^2} \right)}^{2} \, ds \\
&=&  \int_{0}^{\, 2\pi \,} \, {\left(x+ \frac{d^{2} x}{dt^2} \right)}^{2} + {\left(y+ \frac{d^{2} y}{dt^2} \right)}^{2} \, dt.
\end{eqnarray*}
\item[\textbf{(iv)}]
We use the property \eqref{arc} to deduce
\[
\frac{L^2}{\, 2\pi \, } = \int_{0}^{\, 2\pi \,} \, {\left(\frac{L}{\, 2\pi \,} \right)}^{2} \, dt = \int_{0}^{\, 2\pi \,} \, {\left(\frac{dx}{dt} \right)}^{2} +
{\left(\frac{dy}{dt} \right)}^{2} \, dt.
\]
\item[\textbf{(v)}]
Since $\mathcal{C}$ is positively oriented by $\mathbf{X}(s)$, the divergence theorem gives
\begin{equation} \label{areaintegral}
2A = \iint_{\Omega} \, \mathrm{div}\, \mathbf{X} \, ds = \int_{\mathcal{C}} \langle \mathbf{X}, \mathbf{N} \rangle \, ds
= \int_{\mathcal{C}} \langle \mathbf{X}, - J \mathbf{T} \rangle \, ds
= \int_{0}^{\, 2\pi \,} \, \left(x \frac{dy}{dt} - y \frac{dx}{dt} \right) \, dt.
\end{equation}
\item[\textbf{(vi)}]
We compute the isoperimetric deficit $L^2 - 4 \pi A$:
\begin{eqnarray*}
\frac{L^2 - 4 \pi A}{\, \pi \,}
&=& 2\left(\frac{L^2}{\, 2\pi \, } - 2A \right) \\
&=& 2 \left(\int_{0}^{\, 2\pi \,} \, {\left(\frac{dx}{dt} \right)}^{2} + {\left(\frac{dy}{dt} \right)}^{2} \, dt -
\int_{0}^{\, 2\pi \,} \, \left(x \frac{dy}{dt} - y \frac{dx}{dt} \right) \, dt \right) \\
&=& \int_{0}^{\, 2\pi \,} \left[ {\left(x- \frac{dy}{dt} \right)}^{2} + {\left(y+ \frac{dx}{dt} \right)}^{2} + \left({\left(\frac{dx}{dt} \right)}^{2} - x^{2} \right)
+ \left({\left(\frac{dy}{dt} \right)}^{2} - y^{2} \right) \, \right] \, dt.
\end{eqnarray*}
\end{enumerate}
Combining these equalities, we have
\begin{equation}\label{combine}
\begin{split}
& \frac{\, 2 {\, \pi \,} \, }{L} \int_{\mathcal{C}} \, {\left\vert \,
\mathbf{X} - {\left(\frac{L}{\, 2\pi \,} \right)} \mathbf{N} \, \right\vert}^{2} \, ds + \frac{\, 2 {\, \pi \,} \, }{3L} \int_{\mathcal{C}} \, {\left\vert \, \mathbf{X}
- {\left(\frac{L}{\, 2\pi \,} \right)}^{2} \kappa \mathbf{N} \, \right\vert}^{2} \, ds - \frac{\, L^2 - 4 \pi A \, }{\, \pi \,} \\
=& {I}_{1}+ \frac{1}{\, 3\, } {I}_{2} - \frac{\, L^2 - 4 \pi A \, }{\, \pi \,} \\
=& \int_{0}^{\, 2\pi \,} \, \left[ \, {\left(x- \frac{dy}{dt} \right)}^{2} + {\left(y+ \frac{dx}{dt} \right)}^{2} \, \right] \, dt +
\frac{1}{\, 3\, } \int_{0}^{\, 2\pi \,} \, \left[ \, {\left(x+ \frac{d^{2} x}{dt^2} \right)}^{2} + {\left(y+ \frac{d^{2} y}{dt^2} \right)}^{2} \, \right] \, dt \\
& \, - \int_{0}^{\, 2\pi \,} \left[ {\left(x- \frac{dy}{dt} \right)}^{2} + {\left(y+ \frac{dx}{dt} \right)}^{2} + \left({\left(\frac{dx}{dt} \right)}^{2} - x^{2} \right)
+ \left({\left(\frac{dy}{dt} \right)}^{2} - y^{2} \right) \, \right] \, dt \\
=& \int_{0}^{\, 2\pi \,} \left[ \, \frac{1}{\, 3\, } {\left(x+ \frac{d^{2} x}{dt^2} \right)}^{2} - \left({\left(\frac{dx}{dt} \right)}^{2} - x^{2} \right) \, \right] \, dt
+ \int_{0}^{\, 2\pi \,} \left[ \, \frac{1}{\, 3\, } {\left(y+ \frac{d^{2} y}{dt^2} \right)}^{2} - \left({\left(\frac{dy}{dt} \right)}^{2} - y^{2} \right) \, \right] \, dt \\
\geq& 0.
\end{split}
\end{equation}
In the last line, we exploited the reverse Wirtinger inequality \eqref{wir01here} twice.

We keep assuming $\mathbf{G} =\frac{1}{\, L\, } \int_{\mathcal{C}} \mathbf{X} \, ds = {O}_{{\mathbb{R}}^{2}}$ and verify the inequalities in
\eqref{Reverse02}:
\begin{equation} \label{wir02here}
0 \leq \frac{\, {L}^{3} \, }{\, 4 {\, \pi \,}^{2} \, } - \int_{\mathcal{C}} {\left\vert \mathbf{X} \, \right\vert}^{2} \, ds \leq \frac{ L^4 }{64 {\, \pi \,}^{2} }
\left(\int_{\mathcal{C}} {\, \kappa \,}^{2} \, ds - \frac{4 \pi^2}{L} \right).
\end{equation}
The first inequality is due to Sachs \cite{Sachs1960}, and is used to prove the isoperimetric inequalities. For instance, see \cite{Chakerian1978, DHT2010, LSY1984, Osserman1978}. It now remains to establish the second inequality, which can be viewed as a reverse Sachs Inequality. We will show that
the second inequality in \eqref{wir02here} is equivalent to the second inequality in \eqref{Reverse01},
which can be written as
\begin{equation} \label{wir01cor2}
\int_{\mathcal{C}} \, {\left\vert \,
\mathbf{X} - {\left(\frac{L}{\, 2\pi \,} \right)} \mathbf{N} \, \right\vert}^{2} \, ds + \frac{\, 1 \, }{3} \int_{\mathcal{C}} \, {\left\vert \, \mathbf{X}
- {\left(\frac{L}{\, 2\pi \,} \right)}^{2} \kappa \mathbf{N} \, \right\vert}^{2} \, ds - \frac{\, L \, }{2 {\, \pi \,}^{2} } \left(L^2 - 4 \pi A \right)
\geq 0.
\end{equation}
The Minkowski formula (for instance, see \cite{Klingenberg2013})
\begin{equation*} \label{lengthint2}
L = \int_{\mathcal{C}} \kappa {\langle \mathbf{X}, \mathbf{N} \rangle} \, ds
\end{equation*}
implies
\begin{equation} \label{square01}
\int_{\mathcal{C}} \, {\left\vert \, \mathbf{X}
- {\left(\frac{L}{\, 2\pi \,} \right)}^{2} \kappa \mathbf{N} \, \right\vert}^{2} \, ds
= \int_{\mathcal{C}} \, {\left\vert \, \mathbf{X} \right\vert}^{2} \, ds - \frac{2 L^3}{{\left(2 \pi \right)}^{2}} +
{\left(\frac{L}{\, 2\pi \,} \right)}^{4} \int_{\mathcal{C}} {\, \kappa \,}^{2} \, ds.
\end{equation}
We also use the geometric identity, deduced in \eqref{areaintegral}, 
\[
2A= \int_{\mathcal{C}} {\langle \mathbf{X}, \mathbf{N} \rangle} \, ds
\]
to have
\begin{equation} \label{square02}
\int_{\mathcal{C}} \, {\left\vert \, \mathbf{X} - {\left(\frac{L}{\, 2\pi \,} \right)} \mathbf{N} \, \right\vert}^{2} \, ds
= \int_{\mathcal{C}} \, {\left\vert \, \mathbf{X} \right\vert}^{2} \, ds - \frac{2AL}{\, \pi \,} + \frac{L^3}{{\left(2 \pi \right)}^{2}}
\end{equation}
We plug identities \eqref{square01} and \eqref{square02} into the left hand side in \eqref{wir01cor2} to deduce
\begin{eqnarray*}
&& \int_{\mathcal{C}} \, {\left\vert \, \mathbf{X} - {\left(\frac{L}{\, 2\pi \,} \right)} \mathbf{N} \, \right\vert}^{2} \, ds + \frac{\, 1 \, }{3} \int_{\mathcal{C}} \, {\left\vert \, \mathbf{X}
- {\left(\frac{L}{\, 2\pi \,} \right)}^{2} \kappa \mathbf{N} \, \right\vert}^{2} \, ds - \frac{\, L \, }{2 {\, \pi \,}^{2} } \left(L^2 - 4 \pi A \right) \\
&=& \frac{1}{3} {\left(\frac{L}{\, 2\pi \,} \right)}^{4} \left[ \, \int_{\mathcal{C}} {\, \kappa \,}^{2} \, ds - \frac{4 \pi^2}{L} \, \right] - \frac{4}{3} \left[ \,
\frac{\, {L}^{3} \, }{\, 4 {\, \pi \,}^{2} \, } - \int_{\mathcal{C}} {\left\vert \mathbf{X} \, \right\vert}^{2} \, \right],
\end{eqnarray*}
which shows the equivalence of the second inequality in \eqref{wir02here} and the second inequality in \eqref{wir01cor2}.

Finally, we check the equality cases. Suppose that the second equality in \eqref{Reverse01} holds.  By \eqref{combine} and \eqref{wir02equal}, the
$\mathbb{C}$-valued $2\pi$-period function $z(t)=x'(t)+iy'(t)$ has to satisfy
\[
z(t)=\sum_{0<|n|\le 2} a_n e^{int} = a_{-2} e^{-2it} +  a_{-1} e^{-it}  +   a_{1} e^{it} + a_{2} e^{2it},
\]
for some constants $a_{-2}, a_{-1}, a_{1}, a_{2}  \in \mathbb{C}$. Also, by the assumption \eqref{arc}, $|z(t)|^2=\mu$ is a constant.
Then
\[
\mu =  z(t) \overline{z(t)}
%= \sum_{0<|n|\le 2} a_n e^{int} \sum_{0<|n|\le 2}  \overline{a_{-n}} e^{int}
=\sum_{n=-4}^{4} \, \left(\sum_{p+q=n} a_p\overline {a_{-q}} \right) \, e^{int}.
\]
The uniqueness of Fourier series shows that, for all $n \in \left\{-4, -3, -2, -1, 1, 2, 3, 4\right\}$, the coefficients vanish:
\[
\sum_{p+q=n, \, 1  \leq |p|, |q| \leq 2} a_p\overline {a_{-q}} =0.
\]
In particular, we have
\[
\quad a_{2} \overline{a_{1}} + a_{-1}  \overline{a_{-2}} =0, \quad a_{1} \overline{a_{-1}}=0, \quad
a_{2} \overline{a_{-1}} + a_{1}  \overline{a_{-2}} =0, \quad  a_{2} \overline{a_{-2}}=0.
\]
These equalities imply that at most one of the coefficients $a_{-2}, a_{-1}, a_{1}, a_{2}$ is non-zero. By applying a rigid motion in ${\mathbb{R}}^{2}=\mathbb{C}$, we can assume $x'(t)+iy'(t)= R e^{it}$ or $x'(t)+iy'(t)=R e^{i \left(2t\right)}$ for some constant $R>0$. We recall that the simple closed curve $\mathcal C$ is parameterized by the $2\pi$-period patch $x(t)+iy(t)$. We conclude that the curve $\mathcal C$ is a circle.
\end{proof}

\begin{thm}\label{second thm}
Let $\mathcal{C}$ be a simple closed $C^{m}$ convex curve with length $L$.
Assume that $\mathcal{C}=\partial\Omega$ is the boundary of the domain $\Omega \subset {\mathbb{R}}^{2}$ with area $A$.
Let $\kappa>0$ be the curvature of $\mathcal C$ and $\rho=\frac{1}{\, \kappa \,}$ be the radius of curvature. Let $S_{m,k}$ be defined by \eqref{secondpolynomial}.
\begin{enumerate}
\item
If $m$ is an odd positive integer,
\begin{align*}
L^2-4\pi A\ge \frac{(m-1)\pi}{m} \left(\int_{\mathcal C}\frac{1}{\, \kappa \,}ds- 2A\right)-\frac{\, 2\pi \,}{(m!)^2}\sum_{l=1}^{m-2}S_{m,l+1}
\int_{\mathcal C}\frac {1} {\rho(s)}\left[\left(\rho(s)\frac{d}{ds}\right)^l \rho(s)\right]^2 ds.
\end{align*}
\item
If $m$ is an even positive integer,
\begin{align*}
L^2-4\pi A
\le \frac{(m-1)\pi}{m} \left(\int_{\mathcal C}\frac{1}{ \kappa }ds- 2A\right)+\frac{\, 2\pi \,}{(m!)^2}\sum_{l=1}^{m-2}S_{m,l+1}
\int_{\mathcal C}\frac {1} {\rho(s)}\left[\left(\rho(s)\frac{d}{ds}\right)^l \rho(s)\right]^2 ds.
\end{align*}
\end{enumerate}
\end{thm}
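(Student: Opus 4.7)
The plan is to apply the Wirtinger-type inequality \eqref{prop3} of Proposition \ref{Wir3} directly to the support function of $\mathcal{C}$, rather than to $\rho$ itself. First I would parametrize $\mathcal{C}$ by the angle $\theta\in[0,2\pi)$ that the outward unit normal makes with a fixed axis, and let $h(\theta):=\langle \mathbf{X}(\theta), \mathbf{N}(\theta)\rangle$ be the resulting $2\pi$-periodic support function. Since $\mathcal{C}$ is convex and of class $C^m$, so is $h$, and the classical formulas
\[
\rho=h+h'',\qquad ds=\rho\, d\theta,\qquad L=\int_0^{2\pi}h\, d\theta,\qquad 2A=\int_0^{2\pi}\bigl(h^2-(h')^2\bigr)\, d\theta
\]
hold; in particular $\frac{d}{d\theta}=\rho\frac{d}{ds}$, so $\rho^{(l)}(\theta)=(\rho\tfrac{d}{ds})^l\rho$.

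The central observation is that $h^{(k+1)}+h^{(k-1)}=(h''+h)^{(k-1)}=\rho^{(k-1)}$ for every $k\ge 1$, which translates the Wirtinger integrals on the right-hand side of \eqref{prop3} into curvature integrals via
\[
\int_0^{2\pi}\bigl(h^{(k+1)}+h^{(k-1)}\bigr)^2\, d\theta \;=\; \int_0^{2\pi}\bigl(\rho^{(k-1)}\bigr)^2\, d\theta \;=\; \int_{\mathcal{C}}\frac{1}{\rho}\Bigl[\bigl(\rho\tfrac{d}{ds}\bigr)^{k-1}\rho\Bigr]^2 ds;
\]
for $k=1$ this is just $\int_{\mathcal{C}}\tfrac{1}{\kappa}\,ds$. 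Inserting these identities together with $\int(h^2-(h')^2)\,d\theta=2A$ and $(\int h\,d\theta)^2=L^2$ converts Proposition \ref{Wir3}, specialized to $h$, into an inequality whose only ingredients are $L$, $A$, $\int_{\mathcal{C}}\tfrac{1}{\kappa}\,ds$, and the curvature integrals $\int_{\mathcal{C}}\tfrac{1}{\rho}\bigl[(\rho\tfrac{d}{ds})^l\rho\bigr]^2 ds$ appearing in the theorem.

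What remains is an algebraic reduction. Using $\frac{(-1)^m}{2}(m-1)!(m+1)!=-S_{m,0}$ together with the identity $S_{m,0}-S_{m,1}=P_m(0)=(-1)^{m-1}(m!)^2$, and rewriting $2A=\frac{L^2}{2\pi}-\frac{L^2-4\pi A}{2\pi}$, the $L^2$ and $2A$ contributions will consolidate into $\frac{(-1)^{m-1}(m!)^2}{2\pi}(L^2-4\pi A)+S_{m,1}\bigl(\int_{\mathcal{C}}\tfrac{1}{\kappa}\,ds-2A\bigr)$. Multiplying through by $\frac{2\pi}{(m!)^2}$ and computing $\frac{2\pi S_{m,1}}{(m!)^2}=\frac{(-1)^m\pi(m-1)}{m}$ then yields
\[
0\le (-1)^{m-1}\Bigl[(L^2-4\pi A)-\tfrac{(m-1)\pi}{m}\bigl(\int_{\mathcal{C}}\tfrac{1}{\kappa}\,ds-2A\bigr)\Bigr]+\tfrac{2\pi}{(m!)^2}\sum_{l=1}^{m-2}S_{m,l+1}\int_{\mathcal{C}}\tfrac{1}{\rho}\bigl[(\rho\tfrac{d}{ds})^l\rho\bigr]^2\,ds,
\]
from which the two cases of the theorem split according to the sign of $(-1)^{m-1}$: $+1$ for odd $m$ gives the lower bound in (1), and $-1$ for even $m$ gives the upper bound in (2). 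The main obstacle will be this sign-and-constant bookkeeping: the identity $S_{m,0}-S_{m,1}=P_m(0)$ is precisely what forces the mixed $L^2$-and-$A$ terms to assemble into a single clean multiple of the isoperimetric deficit, and without it the reduction does not close.
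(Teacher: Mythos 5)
Your proposal is correct and is essentially the paper's own argument: both apply Proposition \ref{Wir3} to the support function $h(\theta)$, use $L=\int_0^{2\pi}h\,d\theta$, $2A=\int_0^{2\pi}\bigl(h^2-(h')^2\bigr)d\theta$, $\rho=h+h''$, $d\theta=\kappa\,ds$ and $\tfrac{d}{d\theta}=\rho\tfrac{d}{ds}$ to convert every term of \eqref{prop3} into the stated geometric quantities, and finish with the same constant bookkeeping via $S_{m,0}=P_m(1)$, $S_{m,0}-S_{m,1}=(-1)^{m-1}(m!)^2$ and $\tfrac{2\pi S_{m,1}}{(m!)^2}=\tfrac{(-1)^m(m-1)\pi}{m}$, the parity of $m$ deciding the direction of the final inequality. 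No gaps; your sign-and-constant computations check out against the paper's.
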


\begin{proof}
We use the notation in the proof of Theorem \ref{first thm}. Let $\theta$ denote the normal angle function on the convex curve $\mathcal{C}$.
We define the support function $h=h\left(\theta\right)=\langle \mathbf{X}, \mathbf{N} \rangle$. We have the equalities \cite[p. 34]{ChouZhu2001}
\begin{equation}\label{rho}
\kappa ds = d\theta, \quad \frac{d}{d\theta}=\frac{1}{\, \kappa\, } \frac{d}{\, ds\, }, \quad \frac{1}{\, \kappa \,} = h+ \frac{d^2}{d {\theta}^{2}} {h}.
\end{equation}
By the Minkowski formula, we also have
\[
L = \int_{\mathcal{C}} \, {\langle \mathbf{X}, \mathbf{N} \rangle} \kappa \, ds = \int_{0}^{\, 2\pi \,} h \, d\theta
\]
and
\[
2A= \int_{\mathcal{C}} \, {\langle \mathbf{X}, \mathbf{N} \rangle} \, ds = \int_{0}^{\, 2\pi \,} \, \frac{h}{\, \kappa \,} \, d\theta
= \int_{0}^{\, 2\pi \,} \, h \left(h+ \frac{d^2}{d {\theta}^{2}} {h} \right) \, d\theta
= \int_{0}^{\, 2\pi \,} \, \left(h^2 - {\left(\frac{dh}{d \theta} \right)}^{2} \right)\, d\theta.
\]
We compute ${S_{m,1}}/\left(\frac{(-1)^{m-1}(m!)^2}{\, 2\pi \,}\right)=\frac{(1-m)\pi}{m}$. Putting all these into \eqref{prop3}, we arrive at the results.
\end{proof}
\begin{rmk}[Equality cases of Theorem \ref{second thm}]
We regard $\mathcal C$ as a curve on the complex plane $\mathbb{C}={\mathbb{R}}^{2}$. Suppose that the inequality becomes an equality, then by Proposition \ref{Wir3}, 
\[
 h(\theta)=\sum_{n=-m}^{m}a_n e^{in \theta}
\]
with $a_{-n}=\overline {a_n}$ (as $h$ is real-valued). Here, the normal and the tangent vectors are represented by $e^{i\theta}$ and $ie^{i\theta}$ respectively.
By identifying $\mathbb R^2$ with $\mathbb C$, we recover $\mathcal C$ from $h$ by the parametrization  \cite[p. 34]{ChouZhu2001}
\[
\mathbf X(\theta)=h(\theta)(\cos \theta,\sin \theta)+h'(\theta) (-\sin \theta,\cos \theta)=h e^{i\theta}+i h'(\theta)e^{i\theta}= \sum_{n=-m}^{m}a_n(1-n)e^{i(n+1)\theta}.
\]
 It is not known to us whether there is a simple algebraic condition on the coefficients $a_n$ to guarantee that $\mathcal C$ is a convex curve.  By \eqref{rho}, the convexity of $\mathcal C$ is equivalent to 
 the analytic condition
 \[
 0<h''(\theta)+h(\theta)=a_{0}-2 \sum_{n=2}^{m}\left(n^{2}-1\right) \operatorname{Re}\left(a_{n} e^{i n \theta}\right).
 \]
 For an explicit example of non-round convex curves, we refer the interested readers to \cite[pp. 387]{Groemer1990}.
\end{rmk}

\begin{exmp} \label{coro second main}
Let us examine the inequalities in Theorem \ref{second thm} for $m=1,2,3$.
\begin{enumerate}
\item
When $m=1$, it reduces to the classical isoperimetric inequality $L^2-4\pi A\ge 0$.
\item
When $m=2$, this recovers  the Lin-Tsai inequality \cite[Lemma 1.7]{LinTsai2012} for the convex curve $\mathcal{C}$
\begin{equation*}\label{convex01}
\int_{\mathcal{C}} \frac{1}{\, \kappa \,} \, ds - 2A \geq \frac{2}{\, \pi \,} \left({L}^{2} - 4 \pi A \right),
\end{equation*}
or equivalently, 
\begin{equation} \label{lin tsai}
 L^{2} - 4\pi A \leq \frac{\, 2\pi\, }{3} \left(\int_{\mathcal{C}} \frac{1}{\, \kappa \, } ds - \frac{L^2}{\, 2\pi \,} \right).
\end{equation}
\item
Taking $m=3$ in Theorem \ref{second thm} gives a measure of the stability of Lin-Tsai's inequality:
\begin{equation}\label{convex02}
0 \leq \left(\int_{\mathcal{C}} \frac{1}{\, \kappa\, } ds - \frac{L^2}{\, 2\pi \,} \right) - \frac{3}{\, 2\pi\, } \left(L^{2} - 4\pi A \right)
\leq \frac{1}{\, 12\, } \left[ \, \int_{\mathcal{C}} \frac{1}{\, {\, \kappa \,}^{5} \, } {\left(\frac{d\kappa}{ds} \right)}^{2} ds - \frac{6}{\, \pi\, } \left(L^{2} - 4\pi A \right) \, \right].
\end{equation}
\end{enumerate}
\end{exmp}

\begin{cor}[\textbf{Reverse isoperimetric inequalities}]\label{cor reverse}
Let $\mathcal{C}$ be a convex curve with the length $L$ and the curvature function $\kappa>0$.
Assume that $\mathcal{C}=\partial\Omega$ is the boundary of the domain $\Omega \subset {\mathbb{R}}^{2}$ with the area $A$.
Then, we have the following geometric inequalities
\begin{enumerate}
\item[\textbf{(a)}]
\begin{equation} \label{reverse01}
L^{2} - 4\pi A \leq \frac{\, \pi\, }{6} \int_{\mathcal{C}} \frac{1}{\, {\, \kappa \,}^{5} \, } {\left(\frac{d\kappa}{ds} \right)}^{2} ds.
\end{equation}
\item[\textbf{(b)}]
\begin{equation} \label{reverse02}
L^{2} - 4\pi A \leq \frac{L^2}{\, 24\pi \,} \int_{\mathcal{C}} \frac{1}{\, {\, \kappa \,}^{3} \, } {\left(\frac{d\kappa}{ds} \right)}^{2} ds.
\end{equation}
\item[\textbf{(c)}]
\begin{equation} \label{reverse03}
L^{2} - 4\pi A \leq \frac{\, AL \,}{\, 4 \pi \,} \int_{\mathcal{C}} \frac{1}{\, {\, \kappa \,}^{2} \, } {\left(\frac{d\kappa}{ds} \right)}^{2} ds.
\end{equation}
\end{enumerate}
\end{cor}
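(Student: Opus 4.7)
The plan is to derive (a) directly from the two-sided Wirtinger-type estimate \eqref{convex02} of Example \ref{coro second main}(3), and then obtain (b) and (c) from (a) by Cauchy--Schwarz combined with two auxiliary integral bounds.

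For (a), I set $D = L^2 - 4\pi A$ and $\alpha = \int_{\mathcal C}\frac{1}{\kappa}\,ds - \frac{L^2}{2\pi}$. The left inequality in \eqref{convex02} is the Lin--Tsai estimate $\alpha - \tfrac{3D}{2\pi} \ge 0$, while the right inequality reads $\alpha - \tfrac{3D}{2\pi} \le \tfrac{1}{12}\int_{\mathcal C}\tfrac{\kappa_s^2}{\kappa^5}\,ds - \tfrac{D}{2\pi}$. Transitivity of the chain $0 \le \cdots \le \cdots$ yields $\tfrac{D}{2\pi} \le \tfrac{1}{12}\int_{\mathcal C}\tfrac{\kappa_s^2}{\kappa^5}\,ds$, which is exactly (a). For (b) and (c), I pass to the normal-angle parametrization via $d\theta = \kappa\,ds$ and $\rho = 1/\kappa$. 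With $\dot{}$ denoting $d/d\theta$, a direct computation (using $\kappa_s = \kappa\kappa_\theta$ and $\kappa_\theta = -\dot\rho/\rho^2$) shows that the three curvature integrals become $\int_{\mathcal C}\tfrac{\kappa_s^2}{\kappa^5}\,ds = \int_0^{2\pi}\dot\rho^2\,d\theta$, $\int_{\mathcal C}\tfrac{\kappa_s^2}{\kappa^3}\,ds = \int_0^{2\pi}\tfrac{\dot\rho^2}{\rho^2}\,d\theta$, and $\int_{\mathcal C}\tfrac{\kappa_s^2}{\kappa^2}\,ds = \int_0^{2\pi}\tfrac{\dot\rho^2}{\rho^3}\,d\theta$. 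Applying Cauchy--Schwarz to the factorization $\dot\rho^2 = (\rho^{a}\dot\rho)(\rho^{-a}\dot\rho)$ yields
\[
\Bigl(\int_0^{2\pi}\dot\rho^2\,d\theta\Bigr)^2 \le \int_0^{2\pi}\rho^{2a}\dot\rho^2\,d\theta \cdot \int_0^{2\pi}\tfrac{\dot\rho^2}{\rho^{2a}}\,d\theta,
\]
so combining with the squared version of (a) reduces (b) (taking $a=1$) to the auxiliary estimate $\int_0^{2\pi}\rho^2\dot\rho^2\,d\theta \le \bigl(\tfrac{L}{2\pi}\bigr)^4 \int_0^{2\pi}\tfrac{\dot\rho^2}{\rho^2}\,d\theta$, and reduces (c) (taking $a=3/2$) to the analogous bound $\int_0^{2\pi}\rho^3\dot\rho^2\,d\theta \le \tfrac{9A^2L^2}{4\pi^4}\int_0^{2\pi}\tfrac{\dot\rho^2}{\rho^3}\,d\theta$.

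The main obstacle is proving these two auxiliary integral bounds. A naive pointwise estimate $\rho^{2a} \le \bar\rho^{2a}$ with $\bar\rho = L/(2\pi)$ fails in general (consider an elongated ellipse, where $\rho$ exceeds its average near the minor-axis endpoints). The structural input that saves the argument is the convexity constraint: writing $\rho = h + \ddot h$ for a $2\pi$-periodic support function $h$ forces the orthogonality relations $\int \rho\cos\theta\,d\theta = \int \rho\sin\theta\,d\theta = 0$, so that $\rho - \bar\rho$ has Fourier support contained in $\{|n|\ge 2\}$. Combined with the Minkowski identity $2A = \int h\rho\,d\theta$ (needed to introduce $A$ in the estimate for (c)), this convexity-enforced Fourier constraint reduces each auxiliary bound to an integrated Wirtinger-type inequality amenable to the Fourier-series method of Proposition \ref{Wirs0}.
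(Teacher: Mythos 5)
Your derivation of (a) is correct and is exactly the paper's argument: chaining the two inequalities in \eqref{convex02} gives $\frac{1}{2\pi}(L^2-4\pi A)\le\frac{1}{12}\int_{\mathcal C}\kappa^{-5}\kappa_s^2\,ds$. The conversions to the normal-angle variable, $\int_{\mathcal C}\kappa^{-5}\kappa_s^2\,ds=\int_0^{2\pi}\dot\rho^2\,d\theta$, $\int_{\mathcal C}\kappa^{-3}\kappa_s^2\,ds=\int_0^{2\pi}\dot\rho^2\rho^{-2}\,d\theta$, $\int_{\mathcal C}\kappa^{-2}\kappa_s^2\,ds=\int_0^{2\pi}\dot\rho^2\rho^{-3}\,d\theta$, are also correct.

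The route to (b) and (c), however, has a genuine gap: the auxiliary bounds you reduce to are false, not merely unproved. For (b) you need $\int_0^{2\pi}\rho^2\dot\rho^2\,d\theta\le\bigl(\tfrac{L}{2\pi}\bigr)^4\int_0^{2\pi}\dot\rho^2\rho^{-2}\,d\theta$, i.e.\ $\int_0^{2\pi}\rho^4\,\dot v^2\,d\theta\le\bar\rho^{\,4}\int_0^{2\pi}\dot v^2\,d\theta$ with $v=\log\rho$ and $\bar\rho=L/(2\pi)$ the mean of $\rho$. The Fourier constraint coming from convexity only kills the $n=\pm1$ modes of $\rho-\bar\rho$; it does not prevent the weight $\rho^4$ from being large precisely where $\dot\rho$ is concentrated. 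Take $\rho(\theta)=1+B(\theta)+B(\theta+\pi)$ with $B$ a narrow bump of height $M\gg1$ and width $\delta\ll 1/M$: this $\rho$ is positive and has vanishing first harmonics, hence is the radius of curvature of a convex curve, $\bar\rho\approx1$, yet on the bump edges $\rho^4$ reaches $M^4$ while all the mass of $\dot v^2$ sits there, so $\int\rho^4\dot v^2\gg\bar\rho^{\,4}\int\dot v^2$. The same example destroys the analogous bound with $a=3/2$ needed for (c) (there $A\approx\pi$, $L\approx2\pi$, so the constant $9A^2L^2/(4\pi^4)\approx 9$ cannot absorb a factor of order $M^6$). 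The underlying issue is that (b) and (c) are not consequences of (a) plus Cauchy--Schwarz at all: the weighted integrals $\int\kappa^{-3}\kappa_s^2\,ds$ and $\int\kappa^{-2}\kappa_s^2\,ds$ carry independent analytic content. The paper obtains (b) by combining the Bernstein--Mettler inequality $\tfrac14\int_{\mathcal C}\kappa^{-3}\kappa_s^2\,ds\ge-2\pi+\tfrac{4\pi^2}{L^2}\int_{\mathcal C}\kappa^{-1}\,ds$ with the Lin--Tsai bound \eqref{lin tsai}, and (c) by combining a second Bernstein--Mettler inequality with Gage's inequality $\int_{\mathcal C}\kappa^2\,ds\ge\pi L/A$; some such nontrivial external (or separately proved) input is unavoidable, and your final paragraph's appeal to ``an integrated Wirtinger-type inequality amenable to Proposition \ref{Wirs0}'' does not supply it.
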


\begin{proof}
The two inequalities in \eqref{convex02} implies the reverse isoperimetric inequality \eqref{reverse01}.
J. Bernstein and T. Mettler \cite[(1.3) and Theorem 1.1]{BerMet2012} showed the following particular case of Benguria-Loss conjecture:
\begin{equation*} \label{BM02}
\frac{1}{\, 4 \, } \int_{\mathcal{C}} \frac{1}{\, {\, \kappa \,}^{3} \, } {\left(\frac{d\kappa}{ds} \right)}^{2} ds \geq - 2 \pi + \frac{ {\left(2\pi \right)}^{2} }{ {L}^{2} }
\int_{\mathcal{C}} \frac{1}{\, \kappa \,} \, ds.
\end{equation*}
Combining this and the second inequality in \eqref{lin tsai}%\eqref{convex01}
\begin{equation*} \label{LT}
\int_{\mathcal{C}} \frac{1}{\, \kappa \, } ds \geq \frac{L^2}{\, 2\pi \,} + \frac{3}{\, 2\pi\, } \left(L^{2} - 4\pi A \right)
\end{equation*}
yields the reverse isoperimetric inequality \eqref{reverse02}. Indeed, we have
\[
\frac{1}{\, 4 \, } \int_{\mathcal{C}} \frac{1}{\, {\, \kappa \,}^{3} \, } {\left(\frac{d\kappa}{ds} \right)}^{2} ds \geq
- 2 \pi + \frac{ {\left(2\pi \right)}^{2} }{ {L}^{2} } \left[ \frac{L^2}{\, 2\pi \,} + \frac{3}{\, 2\pi\, } \left(L^{2} - 4\pi A \right) \right]
= \frac{\, 6\pi\, }{{L}^{2}} \left(L^{2} - 4\pi A \right).
\]
Combining the Bernstein-Mettler inequality \cite[(1.4) and Theorem 1.1]{BerMet2012}
\begin{equation*} \label{BM02}
\frac{1}{\, 4 \, }\int_{\mathcal{C}} \frac{1}{\, {\, \kappa \,}^{2} \, } {\left(\frac{d\kappa}{ds} \right)}^{2} ds \geq - \frac{ {\left(2\pi \right)}^{2} }{ {L} } +
\int_{\mathcal{C}} {\, \kappa \,}^{2} \, ds.
\end{equation*}
and the Gage inequality \cite{Gage1983}
\begin{equation*} \label{Gage}
\int_{\mathcal{C}} {\, \kappa \,}^{2} \, ds \geq \frac{\pi L}{A}
\end{equation*}
yields the reverse isoperimetric inequality \eqref{reverse03}. Indeed, we have
\[
\frac{1}{\, 4 \, }\int_{\mathcal{C}} \frac{1}{\, {\, \kappa \,}^{2} \, } {\left(\frac{d\kappa}{ds} \right)}^{2} ds \geq - \frac{ {\left(2\pi \right)}^{2} }{ {L}} +
\frac{\pi L}{A} = \frac{\, \pi \,}{AL} \left(L^{2} - 4\pi A \right).
\]
\end{proof}

\bigskip

 \bigskip
 
\end{document}